\documentclass[a4paper,11pt]{amsart}
\usepackage{amsmath,amssymb,amsthm,amscd,xypic,enumerate}

\title{Weight structures cogenerated by weak cocompact objects}

\author{George Ciprian Modoi}
\address{Babe\c s--Bolyai University, Faculty of Mathematics and Computer Science \\
1, Mihail Kog\u alniceanu, 400084 Cluj--Napoca, Romania}
\email{cmodoi@math.ubbcluj.ro}

\thanks{}

\subjclass[2010]{18E30, 16D90, 55U35}
\keywords{weak compact object, weak cocompact object, t-structure, weight structure, Brown representability}

\date{\today}


\renewcommand{\iff}{if and only if }

\newcommand{\la}{\longrightarrow}

\newcommand{\N}{\mathbb{N}}
\newcommand{\Z}{\mathbb{Z}}
\newcommand{\Q}{\mathbb{Q}}

\DeclareMathOperator{\Hom}{Hom} 
 
\DeclareMathOperator{\Ker}{Ker}

\DeclareMathOperator{\colim}{\underrightarrow{\textrm{\rm colim}}}
\DeclareMathOperator{\ilim}{\underleftarrow{\lim}}
\DeclareMathOperator{\hocolim}{\underrightarrow{\textrm{hocolim}}}
\DeclareMathOperator{\holim}{\underleftarrow{\textrm{\rm holim}}}

\newcommand{\A}{\mathcal{A}}

\newcommand{\Y}{\mathcal{Y}}
\newcommand{\C}{\mathcal{C}}

\newcommand{\CS}{\mathcal{S}}
\newcommand{\T}{\mathcal{T}}

\newcommand{\X}{\mathcal{X}}

\newcommand{\G}{\mathcal{G}}

\newcommand{\ModR}{\hbox{\rm Mod-}R}
\newcommand{\ModA}{\hbox{\rm Mod-}A}

\newcommand{\FlatR}{\hbox{\rm Flat-}R}

\newcommand{\ProjR}{\hbox{\rm Proj-}R}

\newcommand{\Ab}{\mathcal{A}b}

\newcommand{\opp}{^\textit{o}}

\newcommand{\Add}[1]{\mathrm{Add}({#1})}
\newcommand{\Prod}[1]{\mathrm{Prod}({#1})}
\newcommand{\Prd}{\mathrm{Prod}}

\newcommand{\Htp}[1]{\mathbf{K}({#1})}


\theoremstyle{plain}
\newtheorem{thm}{Theorem}[section]
\newtheorem{lem}[thm]{Lemma}
\newtheorem{prop}[thm]{Proposition}

\theoremstyle{definition}

\newtheorem{constr}[thm]{Construction}

\theoremstyle{remark}
\newtheorem{rem}[thm]{Remark}
\newtheorem{expl}[thm]{Example}

\begin{document}

\begin{abstract}
We study t-structures generated by sets of objects which satisfy a condition weaker than the compactness. We also study weight structures cogenerated by sets of objects satisfying the dual 
condition. Under some appropriate hypothesis, it turns out that the weight structure is right adjacent to the t-structure. 
\end{abstract}

\maketitle

\section*{Introduction}
 
Denote by $\T$ a triangulated category, and by $\T\opp$ its dual. If necessary, in this Introduction we assume implicitly that $\T$ has (co)products. An object $G\in\T$ is compact if the covariant hom-functor 
$\T(G,-)$ preserves coproducts. Let $\G\subseteq\T$ be a set of compact objects. Then one can construct the smallest localizing subcategory containing $\G$. 
Recall that a {\em triangulated subcategory} of $\T$ is a subcategory which closed under extensions and shifts, and a {\em (co)localizing subcategory} is a triangulated subcategory which is also 
closed under arbitrary coproducts (products) in $\T$ (all subcategories we consider in this paper are full). 
The construction of the smallest localizing subcategory can be generalized in various ways. For example, we can consider the smallest {\em aisle} containing $\G$, that is the smallest subcategory which is closed under extensions, 
coproducts and only positive shifts (see for example \cite[Theorem 6.1]{OPS19}). This aisle is a component of (and determines) a t-structure, 
as this concept was defined in \cite{BBD82} (the definition is recalled in Section \ref{constr} below). 
In this paper we try to formalize and dualize this approach. Since duals of compact objects, that is cocompact ones, rarely exist in triangulated category, in \cite{OPS19} are defined $0$-cocompact 
objects, notion which is obtained by weakening the dual of the definition of compact objects above. Inspired by this approach, we define weak compact and weak cocompact objects in $\T$, 
and consider t-structures generated by a set of weak compact objects, respectively weight structures cogenerated by a set of weak cocompact objects. Note that weight structure are in some sense 
duals of t-structures; they were independently defined in \cite{B10} and \cite{P10}, in this last paper being called co-t-structures. 

The paper is organized as follows: In Section \ref{constr} we take an object $X\in\T$, a set of objects $\C\subseteq\T$ and 
we construct inductively two towers of objects, called a $\C$-cophantom respectively a $\C$-cocellular tower associated to $X$. We learned this construction from \cite{AB00}, but here we 
performed it in the dual setting, since this case seems to be less known. In this way we ``approximate'' $X$ with objects which are constructed inductively from objects in $\C$.
Theorems \ref{w-str} and \ref{t-str} are similar to \cite[Theorems 6.6 and 6.1]{OPS19}, but in addition they are  completed with results concerning Brown representability for 
$\T\opp$, respectively $\T$; these completions are based on the approximation of an object $X$ described above (compare with \cite[Theorem 1.1]{M16}).   
The main result from Section \ref{main} is Theorem \ref{adjacent}, where there are given conditions under which the the t-structure generated by a set of weak compact objects is left adjacent to the 
weight structure cogenerated by a set of weak cocompact objects. The key observation here is that if $G$ is any object, then 
the covariant hom-functors $\T(G,-)$ behaves well with respect to inverse towers and homotopy limits involved in the construction of $\C$-cophantom and $\C$-cocellular towers. 
In Section \ref{examples} we focus on finding examples of weak (co)compact objects, in order to apply the results of the previous ones. We show that both symmetric pairs 
in arbitrary triangulated categories defined in the spirit of Krause's \cite{K02}, and the set of complexes which generates the homotopy category of projective modules 
inside the homotopy category of flat modules considered by Neeman in \cite{N11} fulfill some of our hypotheses.

\section{Cophantom and cocellular towers}\label{constr}
In this paper $\T$ will denote always a triangulated category, whose suspension functor is denoted by $\Sigma$. 
If it is convenient, we write a triangle $X\to Y\to Z\to\Sigma X$, shortly as $X\to Y\to Z\stackrel{+}\to$. Let $\CS\subseteq\T$ be a subcategory of $\T$; its 
left and right orthogonal $^\perp\CS$ and $\CS^\perp$ are by definition the (full) subcategories of $\T$ having as objects those $X$ for which $\T(X,S)=0$, respectively 
$\T(S,X)=0$ for all $S\in\CS$. We denote by $\Sigma^{\geq0}\CS=\bigcup_{n\geq0}\Sigma^n\CS$, $\Sigma^{\leq0}\CS=\bigcup_{n\leq0}\Sigma^n\CS$, respectively $\Sigma^\Z\CS=\bigcup_{n\in\Z}\Sigma^n\CS$
the completion of $\CS$ under positive shifts, negative shifts respectively all shifts. More generally, if $K\subseteq\Z$ then we denote $\Sigma^K\CS=\bigcup_{k\in K}\Sigma^k\CS$. 
Note that if $\CS$ is a small set, the same is true for these completions. 
Among others, these notations allow us to use only the left and right orthogonal defined above (for example the subcategory consisting of all $X\in\T$ for which $\T(\Sigma^nX,S)=0$ for all $n\leq0$ and 
all $S\in\CS$ is equal to ${^\perp\Sigma^{\geq0}\CS}$, so we don't need to index the symbol $\perp$, how is the usage in other papers, e. g. \cite{OPS19}). It is also clear that  $\Sigma\Sigma^{\geq0}\CS\subseteq\Sigma^{\geq0}\CS$, 
$\Sigma^{\leq0}\CS\subseteq\Sigma\Sigma^{\leq0}\CS$ and $\Sigma\Sigma^\Z\CS=\Sigma^\Z\CS$. 
We say that $\CS$ (co)generates $\T$ if for an object $X\in\T$ the equality $\T(S,X)=0$ (respectively $\T(S,X)=0$) for all $S\in\Sigma^\Z\CS)$ implies $X=0$. Recall also that a {\em$\CS$-precover} of an object $X\in\T$ is a map $S_X\to X$ with $S_X\in\CS$, such that the induced map $\T(S,S_X)\to\T(S,X)$ is surjective for all 
$S\in\CS$. Dually we define an {\em $\CS$-preenvelope}. 

Recall that a {\em torsion pair} in $\T$ is a pair $(\X,\Y)$ of (full) subcategories, such that $\T(X,Y)=0$ for all $X\in\X$ and all $Y\in\Y$ and for all 
$T\in\T$ there is a triangle $X\to T\to Y\stackrel{+}\to$, with $X\in\X$ and $Y\in\Y$.  Further, this torsion pair is called a {\em t-structure}, respectively a {\em weight structure} 
(or {\em w-structure} for short), provided that $\Sigma\X\subseteq\X$ (or, equivalently, $\Y\subseteq\Sigma\Y$), respectively $\X\subseteq\Sigma\X$ (or, equivalently, $\Sigma\Y\subseteq\Y$).  
If $(\X,\Y)$ is a t-structure (w-structure), then the 
triangle $X\to T\to Y\stackrel{+}\to$, with $X\in\X$ and $Y\in\Y$ is called the {\em t-decomposition} (respectively a {\em w-decomposition}) of $T\in\T$. The difference between the definite, 
respectively undefinite article used before is explained by the fact that in the case of a t-structure this triangle is functorial in $T\in\T$, but is not at all unique 
in the case of a w-structure. Moreover if $(\X,\Y)$ is a t-structure, then the inclusion functors $\X\to\T$ and $\Y\to\T$ have a right, respectively left adjoint which are defined by  the assignments 
$T\mapsto X$ and $T\mapsto Y$, where  $X\to T\to Y\stackrel{+}\to$ is the t-decomposition of $\T$. A t-structure $(\X,\Y)$ is is called {\em stable} if $\Sigma\X=\X$ (or, equivalently, $\Y=\Sigma\Y$). 
In this case $\X$ is a {localizing subcategory}, and $\Y$ is a {colocalizing subcategory} of $\T$. Note that the notions of stable t-structure and stable w-structure coincide. 
Given two torsion pairs $(\X,\Y)$ and $(\mathcal{U},\mathcal{V})$ we say that $(\mathcal{U},\mathcal{V})$
{\em right adjacent} to $(\X,\Y)$ (and, symmetrically, $(\X,\Y)$ is {\em left adjacent} to $(\mathcal{U},\mathcal{V})$) if $\Y=\mathcal{U}$, case in which 
$(\X,\Y=\mathcal{U},\mathcal{V})$ is called a {\em TTF triple}. We call {\em (co)suspended} a TTF triples $(\X,\Y=\mathcal{U},\mathcal{V})$ for which $\Y\subseteq\Sigma\Y$ (respectively $\Sigma\Y\subseteq\Y$). If this is the case, then clearly $(\X,\Y)$ is a w-structure and $(\mathcal{U},\mathcal{V})$
is a t-structure (respectively, $(\X,\Y)$ is a t-structure and $(\mathcal{U},\mathcal{V})$ is a w-structure). Our interest in (co)suspended TTF triples originated in the fact that they appear in the theory of (co)silting objects (e.g. see \cite{A19} or \cite{MV18}).

Suppose now $\T$ has products, and let $\C\subseteq\T$ be a (small) set of objects. We denote by $\Prod\C$ the full
subcategory of $\T$ consisting of all direct factors of products of objects in $\C$.
We call {\em$\C$-cophantom} a map $X\to Y$ in $\T$ such that the induced map $\T(X,C)\leftarrow\T(Y,C)$ 
vanishes for all $C\in\C$. Observe that a map in $\T$ is $\C$-cophantom \iff it is $\Prod\C$-cophantom. Denote \[\Psi(\C)=\{\psi\mid\psi\hbox{ is a }\C\hbox{-cophantom}\},\] and observe that 
$\Psi(\C)$ is an ideal in $\T$, that is closed under sums (of  maps), and under compositions with arbitrary maps.

A diagram in $\T$ (indexed over $\N$) of the form \[X_0\stackrel{\xi_0}\leftarrow X_1\stackrel{\xi_1}\leftarrow X_2\leftarrow\cdots\] is called an {\it inverse tower}. 
A dual diagram is called a {\it direct tower}. Sometimes we skip the adjectives ``inverse'' or ``direct'' when they are clear from the context. 
An inverse tower is called {\it$\C$-cophantom tower} if all its connecting maps are $\C$-cophantoms.  Recall that the 
homotopy limit of the inverse tower above is defined (up to a non--canonical isomorphism) by the triangle 
\[ \holim X_i\la\prod_{i\in\N}X_i\stackrel{1-\xi}\la\prod_{i\in\N}X_i\stackrel{+}\la, \]
where the $j$-th component of $\xi$ is $\prod_{i\in\N}X_i\to X_{j+1}\stackrel{\xi_j}\to X_j$
(see \cite[dual of Definition 1.6.4]{N01}).

\begin{lem}\label{C-pre}
 If $\C$ is a set of objects, then every object $X\in\T$ has a $\Prod\C$-preenvelope.
\end{lem}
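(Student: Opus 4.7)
The plan is to use the standard ``solution set'' product construction for preenvelopes. Define
\[ \eta \colon X \longrightarrow P := \prod_{(C,f)} C, \]
where the index runs over all pairs $(C,f)$ with $C \in \C$ and $f \in \T(X,C)$, and the $(C,f)$-component of $\eta$ is the morphism $f$ itself. Since $\C$ is a set and $\T$ is (implicitly) locally small, this index is a genuine set; since $\T$ has products and each factor lies in $\C$, the object $P$ exists and belongs to $\Prod\C$.

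To verify the preenvelope property, let $P' \in \Prod\C$ and $g \colon X \to P'$ be arbitrary. By definition of $\Prod\C$, there exist an object $Q = \prod_{i \in I} C_i$ with $C_i \in \C$ and morphisms $\iota \colon P' \to Q$, $\rho \colon Q \to P'$ with $\rho\iota = 1_{P'}$. Put $g_i = \pi_i \iota g \colon X \to C_i$, where $\pi_i \colon Q \to C_i$ is the canonical projection. Each pair $(C_i,g_i)$ is an index of the product defining $P$, so the corresponding projection $p_i \colon P \to C_i$ satisfies $p_i \eta = g_i$ by construction. Assembling the $p_i$ yields a morphism $h \colon P \to Q$ with $h \eta = \iota g$, and then $g = \rho \iota g = (\rho h) \eta$ factors through $\eta$. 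Hence $\T(P,P') \to \T(X,P')$ is surjective, which is exactly the definition of a $\Prod\C$-preenvelope dual to the precover notion recalled earlier.

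There is essentially no obstacle beyond checking that the index in the product is a set rather than a proper class, which is guaranteed by the standing smallness hypotheses on $\C$ and on $\T$. I would state the construction, then verify the factorization by writing down $\iota,\rho,h$ explicitly as above, and conclude.
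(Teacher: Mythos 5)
Your construction is exactly the paper's: the paper sets $C_X=\prod_{C\in\C}C^{\T(X,C)}$ with structure map having $\alpha$-th component $\alpha$, which is the same product over pairs $(C,f)$ that you index explicitly, and you then carry out the routine surjectivity check that the paper leaves implicit. Same approach, correct.
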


\begin{proof}
 We put $C_X=\prod_{C\in\C}C^{\T(C,X)}$ and the $\Prod\C$-preenvelope of $X$ is the unique map 
 $X\to C_X$ whose composition with the $\alpha$-th projection $C_X\to C$ is equal to $\alpha$, for all $\alpha\in\T(C,X)$ and all $C\in\C$. 
\end{proof}

\begin{constr}\label{cophantom}
 Let $\C$ be a set of objects as before, and let $X\in\T$. Associated to this data we will construct an inverse tower as following:
 Put ${Y}_0=X$, and if ${Y}_i$ with $i\geq0$ is given, then we construct inductively ${Y}_{i+1}$ via the triangle 
 \[{Y}_{i+1}\stackrel{\psi_i}\to {Y}_i\to C_i\stackrel{+}\to\]
 where ${Y}_i\to C_i$ is the $\Prod\C$-preenvelope of ${Y}_i$.
  Note that provided that ${Y}_i$ is given, even if we construct the preenvelope in a fixed manner as in Lemma \ref{C-pre}, the object ${Y}_{i+1}$ is determined only up to a non-canonical 
 isomorphism.  By the very definition of a preenvelope, we know that for all $C\in\C$ the second morphism in the following exact sequence 
 \[\T({Y}_{i+1},C)\leftarrow\T({Y}_i,C)\leftarrow\T(C_i,C)\]  is surjective, therefore $-\circ\psi_i:\T({Y}_i,C)\to\T({Y}_{i+1},C)$ vanishes.  Therefore all connecting maps 
 of the tower \[{Y}_0\stackrel{\psi_0}\leftarrow {Y}_1\stackrel{\psi_1}\leftarrow {Y}_2\leftarrow\cdots\] are $\C$-cophantoms, justifying the name we will give to it: 
 {\it a $\C$-cophantom tower} associated to $X$. Denote ${Y}_\infty=\holim {Y}_i$. 
\end{constr}

For a set of objects $\C\subseteq\T$ we define inductively $\Prd_0(\C)=\{0\}$ and $\Prd_{n+1}(\C)$ is
the full subcategory of $\T$ which consists of all objects $Y$
lying in a triangle $X\to Y\to Z\stackrel{+}\to$ with $X\in\Prod\C$
and $Z\in\Prd_n(\C)$. Clearly $\Prd_1(\C)=\Prod\C$ and 
the construction leads to an ascending chain of subcategories
$\{0\}=\Prd_0(\C)\subseteq\Prd_1(\C)\subseteq\Prd_2(\C)\subseteq\cdots.$ From
\cite[Remark 07]{N09} we learn that if $X\to Y\to Z\stackrel{+}\to$ is a
triangle with $X\in\Prd_n(\C)$ and $Z\in\Prd_m(\C)$ then
$Y\in\Prd_{n+m}(\C)$. Moreover if $\C$ is supposed to be closed under (de)suspensions, then the same is true for
$\Prd_n(\C)$, (see \cite[Remark 07]{N09} again).  
An object $X\in\T$ is called
{\em$\C$-cofiltered} if it can be written as a homotopy limit 
$X\cong\holim X_n$ of an inverse tower ${X}_0\leftarrow {X}_1\leftarrow {X}_2\leftarrow\cdots$, such that $X_0=0$ and the completion of 
${X}_{n+1}\to {X}_n$ leads to a triangle $C_{n}\to{X}_{n+1}\to{X}_n\stackrel{+}\to,$
with $C_n\in\Prod\C$. If this is the case, then it is easy to see inductively that
${X}_n\in\Prd_n(\C)$, for all $n\in\N$, and the inverse 
tower above is called a {\it$\C$-cofiltration} of $X$.

\begin{constr}\label{cocellular} 
 Fix a $\C$-cophantom tower associated to $X$, and use the notations from Construction \ref{cophantom}. Put $\psi^0=1_X:{Y}_0\to X$ and define inductively $\psi^{i+1}=\psi^i\psi_i:{Y}_{i+1}\to X$, 
 for all $i\geq0$.  
 Use the octahedral axiom in order to complete commutatively the diagram  
 \[\diagram &\Sigma^{-1}C_i\rdouble\ar@{.>}[d]&\Sigma^{-1}C_i\dto \\
        \Sigma^{-1}X\ddouble\rto&{Y}^{i+1}\rto\ar@{.>}[d]^{\delta_i}&{Y}_{i+1}\rto^{\psi^{i+1}}\dto^{\psi_i}&X\ddouble\\
        \Sigma^{-1}X\rto&{Y}^i\rto\ar@{.>}[d]&{Y}_i\rto_{\psi^i}\dto&X \\
        &C_i\rdouble&C_i&
\enddiagram\] with the dotted triangle in the second column. 
 Observe that ${Y}^0=0$ and the objects ${Y}^i$ for $i>0$ are determined again only up to a non-canonical isomorphism. Call 
 \[0={Y}^0\stackrel{\delta_0}\leftarrow {Y}^1\stackrel{\delta_1}\leftarrow {Y}^2\leftarrow\cdots\]
a {\it$\C$-cocellular tower} associated to $X$ (and to the initial $\C$-cophantom tower). Denote ${Y}^\infty=\holim {Y}^i$. The existence of the dotted triange in the diagram above 
implies that the $\C$-cocellular tower associated to $X$ is a $\C$-cofiltration of ${Y}^\infty$.
 \end{constr}

 We say that an object $C\in\T$ is {\it weak cocompact with respect to an inverse tower}
 \[X_0\stackrel{\xi_0}\leftarrow X_1\stackrel{\xi_1}\leftarrow X_2\leftarrow\cdots\] if the equalities $\T(\xi_i,C)=0$, for all 
 $i\geq0$, imply $\T(X_\infty,C)=0$, where $X_\infty=\holim X_i$. Remark that if $\T(\xi_i,C)=0$, for all 
 $i\geq0$, then obviously $\colim\T(X_i,C)=0$. 
 Note also that if $C$ is weak cocompact with respect to a tower as above, then for all $n\in\Z$, the object $\Sigma^nC$ is weak compact with respect to the tower shifted by $n$.
 
 \begin{thm}\label{w-str}
  Let $\C$ be a set of objects in a triangulated category with products $\T$, with the property every $C\in\C$ is weak cocompact with respect to any $\Sigma^{\geq0}\C$-cophantom tower. Then:
  \begin{enumerate}[{\rm (1)}]
   \item $\left({^\perp\Sigma^{\geq0}\C},({^\perp\Sigma^{\geq0}\C})^\perp\right)$ is a w-structure in $\T$. 
   \item $\left({^\perp\Sigma^\Z\C},({^\perp\Sigma^\Z\C})^\perp\right)$ is a stable w-structure (=stable t-structure) in $\T$ and $({^\perp\Sigma^\Z\C})^\perp$ is the smallest colocalizing subcategory of 
   $\T$ containing $\C$. 
   \item If $\T$ is the base of a strong stable derivator and $\C$ cogenerates $\T$, then $\T\opp$ satisfies Brown representability. 
  \end{enumerate}
 \end{thm}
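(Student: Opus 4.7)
The plan is to handle (1) and (2) in parallel by running Construction \ref{cophantom} and Construction \ref{cocellular} on the set $\Sigma^{\geq0}\C$, respectively $\Sigma^\Z\C$, fed by an arbitrary $T\in\T$. The compatible triangles $\Sigma^{-1}T\to Y^i\to Y_i\to T$ coming from the middle rows of the octahedral diagram in Construction \ref{cocellular} have constant leftmost and rightmost columns, so passing to homotopy limits across the system yields (after one rotation) a triangle
\[Y_\infty\la T\la \Sigma Y^\infty\stackrel{+}\la,\]
with $Y_\infty=\holim Y_i$ and $Y^\infty=\holim Y^i$. This is the candidate w-, respectively t-, decomposition of $T$; what remains is to show that the outer terms land in $\X$ and $\X^\perp$.

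For (1) I would take $\D=\Sigma^{\geq0}\C$. The shift-closure $\X\subseteq\Sigma\X$ for $\X={^\perp}\Sigma^{\geq0}\C$ follows from $\T(\Sigma^{-1}X,\Sigma^n C)=\T(X,\Sigma^{n+1}C)=0$ when $n\geq0$. To show $Y_\infty\in\X$, fix $C\in\C$ and $n\geq 0$ and shift the cophantom tower by $-n$; since $\Sigma^{\geq0}\C$ is closed under positive shifts, the shifted tower is still a $\Sigma^{\geq0}\C$-cophantom tower, so the weak cocompactness of $C$ applied to it gives $\T(\Sigma^{-n}Y_\infty,C)=0$, i.e.\ $\T(Y_\infty,\Sigma^n C)=0$. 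For $\Sigma Y^\infty\in\X^\perp$, apply $\Sigma$ to the cocellular triangles $\Sigma^{-1}C_i\to Y^{i+1}\to Y^i\to C_i$: the resulting triangles have leftmost term $C_i\in\Prod\D\subseteq\X^\perp$, and since $\X^\perp$ is closed under extensions and products, induction on $i$ places each $\Sigma Y^i$ in $\X^\perp$, after which the defining triangle of $\holim\Sigma Y^i\cong\Sigma Y^\infty$ places $\Sigma Y^\infty$ there too.

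Part (2) runs the same template with $\Sigma^\Z\C$; any $\Sigma^\Z\C$-cophantom is a $\Sigma^{\geq0}\C$-cophantom (because $\Sigma^{\geq0}\C\subseteq\Sigma^\Z\C$), so the vanishing argument for $Y_\infty$ is identical, and stability $\Sigma\X=\X$ is immediate from shift-closure of $\Sigma^\Z\C$. To identify $\X^\perp$ with the smallest colocalizing subcategory $\CN\supseteq\C$: the inclusion $\CN\subseteq\X^\perp$ is immediate. For the converse, $T\in\X^\perp$ gives $\T(Y_\infty,T)=0$ by orthogonality, so applying $\T(-,T)$ to the triangle above exhibits $T$ as a direct summand of $\Sigma Y^\infty$; by construction $\Sigma Y^\infty$ is an iterated extension of elements of $\Prod\C$ followed by a homotopy limit, hence lies in $\CN$, and $\CN$ is thick because any triangulated subcategory closed under countable products is thick, giving $T\in\CN$.

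Part (3) is where I expect the main obstacle. Given a product-preserving cohomological $H\colon\T\to\Ab$, my plan is to mimic at the level of $H$ the preenvelope construction of Lemma \ref{C-pre}: start with $Z_0=\prod_{C\in\C}C^{H(C)}$ and the natural transformation $\T(Z_0,-)\to H$ coming, via Yoneda and product preservation, from the diagonal element of $H(Z_0)=\prod_C H(C)^{H(C)}$; then iterate this step to build an inverse tower $Z_0\leftarrow Z_1\leftarrow\cdots$ with compatible transformations $\T(Z_i,-)\to H$ correcting the failure of the previous comparison on $\C$. The strong stable derivator hypothesis will be used to realise this tower coherently and to produce, from its homotopy limit $Z_\infty$, a well-defined comparison $\T(Z_\infty,-)\to H$. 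The hard part will be to verify that this comparison is an isomorphism on every $S\in\Sigma^\Z\C$; this is where the weak cocompactness inputs from Parts (1)--(2) must be reused, in the spirit of \cite[Theorem 1.1]{M16}. Once this step is in hand, the cogenerating hypothesis will force the comparison to be an iso on all of $\T$, yielding $H\cong\T(Z_\infty,-)$ and hence Brown representability for $\T\opp$.
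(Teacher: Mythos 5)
The main gap is in your treatment of parts (1) and (2). You write that ``passing to homotopy limits across the system yields (after one rotation) a triangle $Y_\infty\to T\to\Sigma Y^\infty$,'' and you use this as the w-decomposition. In a general triangulated category, homotopy (co)limits of triangles need not assemble into a triangle; this is exactly the feature for which the paper invokes the strong stable derivator hypothesis, via \cite[Corollary 11.4]{KN13}. But that hypothesis is only assumed in part (3), not in (1) and (2). The paper's own argument for (1) and (2) therefore does \emph{not} identify the cone of $Y_\infty\to X$ with $\Sigma Y^\infty$; it takes an arbitrary completion $Y_\infty\to X\to Z\stackrel{+}\to$ and shows $Z\in({^\perp\Sigma^{\geq0}\C})^\perp$ by the argument of \cite[Theorem 6.6]{OPS19}, only using weak cocompactness (as you do) to get $Y_\infty\in{^\perp\Sigma^{\geq0}\C}$. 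The identification $Z\cong\Sigma Y^\infty$ is brought in exclusively in part (3), precisely because the derivator is available there.

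Even setting that aside, your closure argument for $\Sigma Y^\infty\in\X^\perp$ does not quite close. You correctly get $\Sigma Y^i\in\X^\perp$ by induction, since $C_i\in\Prod\Sigma^{\geq0}\C\subseteq\X^\perp$ and $\X^\perp$ is extension- and product-closed. But the rotated homotopy-limit triangle reads $\prod Y^i\to\Sigma Y^\infty\to\prod\Sigma Y^i\stackrel{+}\to$, so you also need $\prod Y^i\in\X^\perp$, i.e.\ $Y^i\in\X^\perp$. Since $\X={^\perp\Sigma^{\geq0}\C}$ is only closed under $\Sigma^{-1}$ and not under $\Sigma$, its right orthogonal $\X^\perp$ is closed under $\Sigma$ but \emph{not} under $\Sigma^{-1}$; so $\Sigma Y^i\in\X^\perp$ does not give $Y^i\in\X^\perp$ (indeed $Y^1\cong\Sigma^{-1}C_0$ lies in $\Prod\Sigma^{\geq-1}\C$, outside the range controlled by $\X$). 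This is another reason the paper does not argue by closure but rather by a direct verification that the cone of $Y_\infty\to X$ is right-orthogonal to $\X$, following \cite{OPS19}.

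Finally, your plan for (3) is a genuinely different and much heavier route than the paper's. The paper observes that, with the derivator in hand, the w-decomposition is $Y_\infty\to X\to\Sigma Y^\infty\stackrel{+}\to$, that $Y_\infty\in{^\perp\Sigma^\Z\C}=0$ because $\C$ cogenerates, hence $X\cong\Sigma Y^\infty$ is $\C$-cofiltered, and then simply cites \cite[Theorem 8]{M13} to conclude Brown representability for $\T\opp$. You instead propose to re-derive representability of a given $H$ by an iterated preenvelope construction at the level of $H$; this is essentially re-proving the cited theorem and leaves the key comparison-isomorphism step as an acknowledged open issue. It is not wrong in spirit, but it is both incomplete as stated and far longer than necessary given that the needed representability lemma already exists in the literature.
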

 
 \begin{proof} For proving (1) let ${Y}_0\stackrel{\psi_0}\leftarrow {Y}_1\stackrel{\psi_1}\leftarrow {Y}_2\leftarrow\cdots$ be a $\Sigma^{\geq0}\C$-cophantom tower associated to an object $X\in\T$, 
  and let $Y_\infty=\holim Y_i$. Then its connecting maps are $\Sigma^{\geq0}\C$-cophantoms, and the same is true for all negative shifts of this tower. Therefore by hypotheses we get 
  $\T(\Sigma^nY_\infty,C)=0$, for all $n\leq0$ and all $C\in\C$. Next we apply the same argument as for (i) of \cite[Theorem 6.6]{OPS19}, the only modification being the use of the argument 
  above for showing that $Y_\infty\in{^\perp\Sigma^{\geq0}\C}$. For (2) we replace in (1) the set $\C$ by $\Sigma^{\leq0}\C$. Then obviously $\Sigma^{\geq0}(\Sigma^{\leq0}\C)=\Sigma^\Z\C$, 
  and it is routine to check that every object in $\Sigma^{\leq0}\C$ is weak cocompact with respect to any inverse tower whose connecting maps are $\Sigma^\Z\C$-cophantoms. The last statement of (2) 
  is proved in (ii) of \cite[Theorem 6.6]{OPS19}.
 
 For (3) note that in the proof of above cited result, the w-decomposition associated to an $X\in\T$ (with respect to the w-structure 
 whose existence is stated in (2) or, {\em mutatis mutandis}, in (1) too) is obtained by completion to a triangle of the map $Y_\infty\to X$. We do not recall precisely what a strong stable derivator is, since the unique feature of this notion we need is the result in \cite[Corollary 11.4]{KN13}: If $\T$ is the base of a strong stable derivator,
 then taking homotopy limits, we can choose the maps such that we get a triangle 
 \[{Y}_\infty\to X\to\Sigma{Y}^\infty\stackrel{+}\to\] therefore this is a w-decomposition of $X$. Then $\T({Y}_\infty,C)=0$ for all $C\in\Sigma^\Z\C$, and because $\C$ cogenerates $\T$, it follows that 
 ${Y}_\infty=0$. Thus the map $X\to\Sigma{Y}^\infty$ is an isomorphism.  Finally, by construction, ${Y}^\infty$ is $\C$ cofiltered, therefore every object in $\T$ is $\C$-cofiltered, 
 and we only have to apply \cite[Theorem 8]{M13}.
 \end{proof}

We say that the w-structure $({^\perp\Sigma^{\geq0}\C},({^\perp\Sigma^{\geq0}\C})^\perp)$ from Theorem \ref{w-str} is {\it cogenerated} by $\C$. In particular we can reformulate the 
fact that $\C$ cogenerates $\T$   
by saying the $(0,\T)$ is the stable t-structure cogenerated by $C$.

\begin{constr}\label{phantom}
We end this section with the remark that all constructions and considerations made in it can be dualized. Thus starting with a set of objects $\G$ in a triangulated category with coproducts $\T$,
we consider the ideal of $\G$-phantom maps
\[\Phi(\G)=\{\phi:X\to Y\mid\T(G,X)\stackrel{\phi\cdot-}\la\T(G,Y)\hbox{ vanishes for all }G\in\G\}.\] A direct tower whose connecting maps are $\G$-phantoms is called a {\it$\G$-phantom tower}. 

We can construct two (direct) towers associated to an object $X$ as follows: 
For $X\in\T$ we construct inductively a tower of the form \[Z_0\stackrel{\phi_0}\la Z_1\stackrel{\phi_1}\la Z_2\la\cdots,\]
by ${Z}_0=X$, and $G_i\to{Z}_i\stackrel{\phi_i}\to{Z}_{i+1}\stackrel{+}\to$ is obtained by the completion to a triangle of an $\Add\G$-precover of $Z_i$, for $i\geq0$. Here $\Add\G$ is 
the subcategory consisting of all direct summands of arbitrary direct sums of objects in $\G$. 
This is called {\em a $\G$-phantom tower associated to $X$}.
Further we construct a $\G$-cellular tower by completion 
of the following diagram with the triangle in the dotted line: \[\diagram &G_i\dto\rdouble&G_i\ar@{.>}[d]& \\
        X\ddouble\rto^{\phi^i}&{Z}_{i}\dto^{\phi_i}\rto&{Z}^{i}\rto\ar@{.>}[d]^{\gamma_i}&\Sigma X\ddouble\\
        X\rto_{\phi^{i+1}}&{Z}_{i+1}\rto\dto&{Z}^{i+1}\rto\ar@{.>}[d]&\Sigma X \\
        &\Sigma G_i\rdouble&\Sigma G_i&
\enddiagram\] Denote by $Z_\infty=\hocolim Z_i$ and $Z^\infty=\hocolim Z^i$. Then $Z^\infty$ is $\G$-filtered, (in the dual sense of above, when we said that $Y^\infty$ is $\C$-cofiltered). 
\end{constr}

An object $G\in\T$ is called {\em weak compact} with respect to a direct tower  \[{X}_0\stackrel{\xi_0}\to{X}_1\stackrel{\xi_1}\to{X}_2\to\cdots\] if $\T(G,\hocolim{X}_i)=0$, provided that 
all $\xi_i$ are $\{G\}$-phantoms. As one can expect, compactness implies weak compactness, 
since  if the object $G$ is compact, then for the tower above we have $\colim\T(G,{X}_i)\cong\T(G,\hocolim{X}_i)$. 
Finally we record the dual of Theorem \ref{w-str}:

\begin{thm}\label{t-str}
 Let $\G$ be a set of objects in a triangulated category with coproducts $\T$ with the property every $G\in\G$ is weak compact with respect to any $\Sigma^{\geq0}\G$-phantom tower. Then:
  \begin{enumerate}[{\rm (1)}]
   \item $\left(^\perp(\Sigma^{\geq0}\G^\perp),\Sigma^{\geq0}\G^\perp\right)$ is a t-structure in $\T$. 
   \item If $\left(^\perp(\Sigma^\Z\G^\perp),\Sigma^\Z\G^\perp\right)$ is a stable t-structure in $\T$ and $^\perp(\Sigma^\Z\G^\perp)$ is the smallest localizing subcategory of $\T$ containing $\G$. 
   \item If $\T$ is the base of a strong stable derivator and $\G$ generates $\T$, then $\T$ satisfies Brown representability. 
  \end{enumerate}
\end{thm}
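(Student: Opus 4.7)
My plan is to dualize the proof of Theorem~\ref{w-str} throughout, replacing the $\C$-cophantom and $\C$-cocellular towers of Constructions~\ref{cophantom}--\ref{cocellular} with the $\G$-phantom and $\G$-cellular towers of Construction~\ref{phantom}, homotopy limits with homotopy colimits, and the membership argument via $\T(-,C)$ with its dual using $\T(G,-)$.

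For part~(1), I take an arbitrary $X\in\T$ and apply Construction~\ref{phantom} with $\Sigma^{\geq0}\G$ in place of $\G$, producing a $\Sigma^{\geq0}\G$-phantom tower $X=Z_0\to Z_1\to\cdots$ and its cellular companion $0=Z^0\to Z^1\to\cdots$, with each $G_i\in\Add(\Sigma^{\geq0}\G)$. Writing $Z_\infty=\hocolim Z_i$ and $Z^\infty=\hocolim Z^i$, the homotopy colimit of the row triangles $X\to Z_i\to Z^i\to\Sigma X$ yields a triangle $\Sigma^{-1}Z^\infty\to X\to Z_\infty\stackrel{+}\to$ which I claim is the t-decomposition of $X$. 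The key computation is that, for each $G\in\G$ and each $n\geq0$, shifting the phantom tower down by $n$ preserves the $\Sigma^{\geq0}\G$-phantom property of its connecting maps (since $\Sigma^nG'\in\Sigma^{\geq0}\G$ whenever $G'\in\Sigma^{\geq0}\G$), and in particular they remain $\{G\}$-phantoms; weak compactness of $G$ then gives $\T(\Sigma^nG,Z_\infty)=\T(G,\hocolim\Sigma^{-n}Z_i)=0$, so $Z_\infty\in\Sigma^{\geq0}\G^\perp$. On the other side, $\Sigma^{-1}Z^\infty$ is the homotopy colimit of an iterated extension tower whose successive cofibres are the $G_i\in\Add(\Sigma^{\geq0}\G)\subseteq{}^\perp(\Sigma^{\geq0}\G^\perp)$; since the latter subcategory is closed under coproducts, extensions and sequential homotopy colimits, $\Sigma^{-1}Z^\infty\in{}^\perp(\Sigma^{\geq0}\G^\perp)$. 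The inclusion $\Sigma\,{}^\perp(\Sigma^{\geq0}\G^\perp)\subseteq{}^\perp(\Sigma^{\geq0}\G^\perp)$ is immediate from $\Sigma\Sigma^{\geq0}\G\subseteq\Sigma^{\geq0}\G$, which concludes~(1) along the lines of the dual of \cite[Theorem~6.6(i)]{OPS19}.

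For~(2), I apply~(1) with $\Sigma^{\leq0}\G$ replacing $\G$; since $\Sigma^{\geq0}(\Sigma^{\leq0}\G)=\Sigma^\Z\G$, the right class $\Sigma^\Z\G^\perp$ is invariant under $\Sigma^{\pm1}$, so the resulting t-structure is stable. A routine verification shows that every object of $\Sigma^{\leq0}\G$ is weak compact with respect to $\Sigma^\Z\G$-phantom towers. The identification of ${}^\perp(\Sigma^\Z\G^\perp)$ as the smallest localizing subcategory of $\T$ containing $\G$ follows by dualizing \cite[Theorem~6.6(ii)]{OPS19}.

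For~(3), the strong stable derivator hypothesis permits the use of (the dual of) \cite[Corollary~11.4]{KN13} to coherently arrange the homotopy colimits and globalize the triangle $\Sigma^{-1}Z^\infty\to X\to Z_\infty\stackrel{+}\to$ from~(2). Since $\G$ generates $\T$ and $Z_\infty\in\Sigma^\Z\G^\perp$, we get $Z_\infty=0$, whence $X\cong\Sigma^{-1}Z^\infty$ is a sequential extension of objects in $\Add(\Sigma^\Z\G)$, so every object of $\T$ admits such a filtration, and \cite[Theorem~8]{M13} yields Brown representability for $\T$. The step I anticipate as most delicate is the verification that $\Sigma^{-1}Z^\infty$ lies in the aisle in~(1): the aisle is only closed under positive shifts, so $Z^\infty\in{}^\perp(\Sigma^{\geq0}\G^\perp)$ alone does not suffice; one must inspect the cellular construction to observe that the successive cofibres of the defining tower of $\Sigma^{-1}Z^\infty$ are the $G_i$ themselves (rather than their shifts), which then places $\Sigma^{-1}Z^\infty$ into the aisle directly.
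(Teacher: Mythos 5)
Your proposal is correct and is precisely what the paper intends: the paper introduces Theorem~\ref{t-str} with ``Finally we record the dual of Theorem~\ref{w-str}'' and supplies no separate proof, so the expected argument is exactly the dualization you carry out, replacing $\C$-cophantom/cocellular inverse towers with $\G$-phantom/cellular direct towers, $\holim$ with $\hocolim$, and deferring to the dual of \cite[Theorem~6.6]{OPS19} and to \cite[Corollary~11.4]{KN13}/\cite{M13}--\cite{M14} at the corresponding points. In particular your observation that $Z_\infty\in\Sigma^{\geq0}\G^\perp$ follows by applying weak compactness to the $n$-fold downward shift of the phantom tower (which remains a $\Sigma^{\geq0}\G$-phantom tower) is the correct dual of the paper's check that $Y_\infty\in{}^\perp\Sigma^{\geq0}\C$, and your note that the cofibres of the tower computing $\Sigma^{-1}Z^\infty$ are the $G_i$ themselves (so that $\Sigma^{-1}Z^\infty$ lands in the aisle) matches the paper's use of the $\C$-cofiltration of $Y^\infty$.
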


The t-structure $(^\perp(\Sigma^{\geq0}\G^\perp),\Sigma^{\geq0}\G^\perp)$ from the previous Theorem is said to be {\it generated} by $\G$. For avoiding unnecessary brackets, 
we adopt the convention that $\Sigma$  is granted a higher precedence than $\perp$, that is $\Sigma^{\geq0}\G^\perp$ should be automatically read as $(\Sigma^{\geq0}\G)^\perp$.

\section{When the $\C$-cogenerated w-structure is right adjacent to the $\G$-generated t-structure}\label{main}

\begin{lem}\label{iso} Let $H:\T\to\A$ be a homological functor into an abelian AB4$^*$ category, anl let \[\diagram
\Sigma^{-1}X\rto\ddouble&Y^{i+1}\rto\dto_{\delta_i}&Y_{i+1}\rto\dto^{\psi_i}&X\ddouble\\
\Sigma^{-1}X\rto&Y^i\rto&Y_i\rto&X
\enddiagram\]
be a diagram in $\T$ whose rows are triangles. 
Denote by $Y^\infty=\holim Y^i$. 
\begin{enumerate}[{\rm (a)}]
 \item If $H(\psi_i)=0=H(\Sigma^{-1}\psi_i)=0$ for all $i\geq0$, then $\ilim H(Y^i)\cong H(\Sigma^{-1}X)$ and $\ilim^{(1)}H(Y^i)=0$. 
 \item If, in addition, $H(\Sigma^{-2}\psi_i)=0$ for all $i\geq0$ and $H$ preserves products, then the map $H(\Sigma^{-1}X)\to H(Y^\infty)$ is an isomorphism. 
\end{enumerate}
\end{lem}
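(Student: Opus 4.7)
The plan for part (a) is to apply $H$ to both rows and split the resulting long exact sequences into short exact sequences
\[0\to U_i\to H(Y^i)\to V_i\to 0,\]
where $U_i=\Coker(H(\Sigma^{-1}Y_i)\to H(\Sigma^{-1}X))$ and $V_i=\Ker(H(Y_i)\to H(X))$. The vertical maps $\delta_i$ and $\psi_i$ turn these into a short exact sequence of inverse towers. Since $H(\psi_i)=0$ by hypothesis, the transition maps of $\{V_i\}$ are zero, giving $\ilim V_i=\ilim^{(1)}V_i=0$. The crucial observation is that, because the squares form a map of triangles, one has $\alpha_i\circ\psi_i=\alpha_{i+1}$ with $\alpha_i:Y_i\to X$; shifting and applying $H$ shows that the boundary map $H(\Sigma^{-1}Y_{i+1})\to H(\Sigma^{-1}X)$ factors through $H(\Sigma^{-1}\psi_i)=0$. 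Hence for every $j\geq 1$, $U_j\cong H(\Sigma^{-1}X)$ with the induced transition $U_{j+1}\to U_j$ equal to the identity, so that $\ilim U_i=H(\Sigma^{-1}X)$ and $\ilim^{(1)}U_i=0$ (Mittag--Leffler is automatic, so the value of $U_0$ is irrelevant). Plugging this into the six-term sequence produced by applying $\ilim$ to $0\to U_\bullet\to H(Y^\bullet)\to V_\bullet\to 0$ in the AB4$^*$ category $\A$ yields (a).

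For part (b), I would combine (a) with the homotopy limit triangle
\[Y^\infty\to\prod Y^i\stackrel{1-\delta}{\to}\prod Y^i\stackrel{+}{\to}.\]
Since $H$ is homological, preserves products, and takes values in an AB4$^*$ category, the standard identification of $\Ker(1-H(\delta))$ with $\ilim H(Y^i)$ and of $\Coker(1-H(\Sigma^{-1}\delta))$ with $\ilim^{(1)}H(\Sigma^{-1}Y^i)$ produces the short exact sequence
\[0\to\ilim^{(1)}H(\Sigma^{-1}Y^i)\to H(Y^\infty)\to\ilim H(Y^i)\to 0.\]
Part (a) already supplies $\ilim H(Y^i)\cong H(\Sigma^{-1}X)$. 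To kill the left-hand term, I apply (a) to the whole diagram shifted once by $\Sigma^{-1}$: the hypotheses of (a) in the shifted situation are exactly $H(\Sigma^{-1}\psi_i)=0$ and $H(\Sigma^{-2}\psi_i)=0$, both of which are given. Hence $\ilim^{(1)}H(\Sigma^{-1}Y^i)=0$, and so $H(Y^\infty)\cong H(\Sigma^{-1}X)$. That this isomorphism is induced by the canonical map $H(\Sigma^{-1}X)\to H(Y^\infty)$ coming from the compatible family $\Sigma^{-1}X\to Y^i$ is visible from how $H(\Sigma^{-1}X)\cong\ilim U_i$ was built in (a).

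The main obstacle lies in the naturality bookkeeping of (a): verifying that the assignments $i\mapsto U_i$ and $i\mapsto V_i$ really are sub- and quotient-towers of $i\mapsto H(Y^i)$ with respect to the $\delta_i$, and that the equality $\alpha_i\circ\psi_i=\alpha_{i+1}$ genuinely forces the factorization of the boundary $H(\Sigma^{-1}Y_{i+1})\to H(\Sigma^{-1}Y_i)\to H(\Sigma^{-1}X)$ through $H(\Sigma^{-1}\psi_i)$. Once these diagram chases are in place, everything else is formal: a Mittag--Leffler vanishing together with the $\ilim/\ilim^{(1)}$ six-term exact sequence for short exact sequences of $\N$-indexed inverse systems, and, for (b), the standard $\holim$ long exact sequence applied twice (once to the original diagram and once to its $\Sigma^{-1}$-shift).
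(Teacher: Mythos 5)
Your proof is correct and follows essentially the same route as the paper: apply $H$ to the diagram, use $H(\psi_i)=0=H(\Sigma^{-1}\psi_i)$ to make the extreme maps in the long exact sequences vanish, compute $\ilim$ and $\ilim^{(1)}$, and for (b) combine the homotopy-limit triangle and product-preservation with (a) applied to both $H$ and $H\circ\Sigma^{-1}$. The only difference is a bookkeeping one in (a): the paper decomposes the tower $H(Y^\bullet)$ (from $i\geq 2$ on) as a direct sum of a constant tower $H(\Sigma^{-1}X)$ and a tower with zero transitions, which requires exhibiting the splitting via the dotted factorization through the kernel, whereas you package the same information as a short exact sequence of towers $0\to U_\bullet\to H(Y^\bullet)\to V_\bullet\to 0$ and invoke the six-term $\ilim/\ilim^{(1)}$ sequence --- a marginally cleaner variant that avoids verifying the splitting, but reaching the identical conclusion.
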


\begin{proof} 
  (a). Applying the homological functor $H$ to the given diagram and using the hypothesis in (a) we get a commutative diagram with exact rows (consisting of solid arrows): 
{\footnotesize \[\diagram
H(\Sigma^{-1}{Y}_{i+1})\rto\dto^0&H(\Sigma^{-1}X)\rto\ddouble&H({Y}^{i+1})\rto\dto^{H(\delta_i)}\ar@{.>}[dl]&H({Y}_{i+1})\rto\dto^0&H(X)\ddouble\\
H(\Sigma^{-1}{Y}_i)\rto&H(\Sigma^{-1}X)\rto&H(Y^i)\rto&H(Y_i)\rto&H(X) 
\enddiagram\]}
From this diagram we deduce that both first and the last map in the first row are zero.  
Thus for $i\geq1$ the same is true for the first and the last map in the second row too. Finally, the factorization through the kernel produces the dotted arrow which makes the whole diagram commutative. 
Thus, starting with $i\geq2$, the inverse tower \[H({Y}^2)\leftarrow H({Y}^3)\leftarrow H({Y}^4)\leftarrow\cdots\] is the direct sum between the towers 
\[H({Y}_2)\stackrel{0}\leftarrow H({Y}_3)\stackrel{0}\leftarrow H({Y}_4)\leftarrow\cdots\hbox{
 and }H(\Sigma^{-1}X)\stackrel{=}\leftarrow H(\Sigma^{-1}X)\stackrel{=}\leftarrow\cdots\]
 and this proves the conclusion.
 
 (b). Since now $H(\Sigma^n\phi_i)=0$ for $n=0,-1,-2$, we can apply  (a), both to $H$ and to $H\cdot\Sigma^{-1}$. We get $\ilim H(Y^i)=H(\Sigma^{-1}X)$, 
 $\ilim H(\Sigma^{-1}Y^i)=H(\Sigma^{-2}X)$ and $\ilim^{(1)}H(Y^i)=0=\ilim^{(1)}H(\Sigma^{-1}Y^i)$. Since $\A$ is AB4$^*$ the respective inverse limit is defined by the exact sequence (see \cite[Remark A.3.6]{N01}):  
 \[0\to\ilim H(Y^i)\to \prod H(Y^i)\to \prod H(Y^i)\to 0,\]
 (where the map between products is ''1-shift'') and similar for $\ilim H(\Sigma^{-1}Y^i)$. 
 On the other hand, $H$ preserves products, therefore applying it to the triangle which define the homotopy limit $Y^\infty$ we get a commutative diagram 
 {\footnotesize\[\diagram \prod H(\Sigma^{-1}Y^i)\rto\dto^{\cong}&\prod H(\Sigma^{-1}Y^i)\rto\dto^{\cong}&0\ar@{.>}[d]&&\\
 H(\Sigma^{-1}\prod Y^i)\rto&H(\Sigma^{-1}\prod Y^i)\rto &H(Y^\infty)\ar@{.>}[d]\rto &H(\prod Y^i)\rto\dto^{\cong}&H(\prod Y^i)\dto^{\cong}\\
 &0\rto&\ilim H(Y^i)\rto\rto&\prod H(Y^i)\rto&\prod H(Y^i)
 \enddiagram \]} 
 which proves the conclusion.    
\end{proof}

\begin{rem}\label{diso} We can dualize Lemma \ref{iso} as follows: 
If $\T$ has coproducts, $H:\T\to\A$ is a homological coproduct preserving functor into an abelian AB4 category, then we $H\opp:\T\opp\to\A\opp$ preserves products, therefore if 
\[\diagram X\rto\ddouble&Z_{i}\rto\dto_{\phi_i}&Z^{i}\rto\dto^{\gamma_i}&\Sigma X\ddouble\\
X\rto&Z_{i+1}\rto&Z^{i+1}\rto&\Sigma X\enddiagram\] is a commutative diagram such that $H(\Sigma^{n}\phi_i)=0$ for all $i\geq0$ and all $n=-1,0,1$, then 
$H(X)\cong\colim H(\Sigma^{-1}Z^i)\cong H(\Sigma^{-1}Z^\infty)$, 
where $Z^\infty=\hocolim Z^i$. A similar conclusion can be drawn if we start with a cohomological (contravariant) functor $H:\T\to\A$, which sends coproducts into products ($\T$
has to have coproducts, and $\A$ has to be AB4$^*$): If $H(\Sigma^{n}\phi_i)=0$ for $n=-1,0,1$ and all $i\geq0$, then $H(X)\cong\ilim H(\Sigma^{-1}Z^i)\cong H(\Sigma^{-1}Z^\infty)$. 
But if we apply such a 
cohomological functor to the diagram considered in Lemma \ref{iso} above, then we get only $H(X)\cong\ilim H(\Sigma^{-1}Y^i)$ and (b) can not be more dualized.  
\end{rem}

\begin{thm}\label{adjacent} Let $\T$ be the base of a strong stable derivator and let $\G$ and $\C$ be two set of objects. 
\begin{enumerate}[{\rm (1)}]
 \item If $\T$ has products, all objects in $\C$ are weak cocompact with respect to any $\Sigma^{\geq0}\C$-cophantom tower and $\Psi(\C)^s\subseteq\Phi(\Sigma^{\{-1,0,1,2\}}\G)$ 
 for some $s\geq 1$, then ${^\perp\Sigma^{\geq0}\C}\subseteq\Sigma^{\geq0}\G^\perp$.
 \item If $\T$ has coproducts, all objects in $\G$ are weak compact with respect to any $\Sigma^{\geq0}\G$-phantom tower and 
 $\Phi(\G)^t\subseteq\Psi(\Sigma^{\{-2,-1,0,1\}}\C)$ for some $t\geq 1$, then $\Sigma^{\geq0}\G^\perp\subseteq{^\perp\Sigma^{\geq0}\C}$.
 \item If both of the hypotheses of (a) and (b) above are 
 fulfilled, then \[\left(^\perp(\Sigma^{\geq0}\G^\perp),\Sigma^{\geq0}\G^\perp={^\perp\Sigma^{\geq0}\C},({^\perp\Sigma^{\geq0}\C})^\perp\right)\] is a cosuspended TTF triple in $\T$.
 \item If $\T$ has products and coproducts, all objects in $\G$ are weak compact with respect to any 
 $\Sigma^{\geq0}\G$-phantom tower, $\Phi(\C)^s\subseteq\Phi(\Sigma^{\geq-1}\G)$ for some $s\geq1$ and $\C\subseteq(\Sigma^{\geq0}\G^\perp)^\perp$, 
 then $\C$ cogenerates a w-structure which is the right adjacent of the $\G$-generated t-structure, as in the case (3) above.
\end{enumerate}
\end{thm}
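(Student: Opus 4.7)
The plan is to establish parts (1) and (2) via a direct identity-morphism argument that bypasses Lemma~\ref{iso} and Remark~\ref{diso}, combine them for part (3) using Theorems~\ref{w-str} and~\ref{t-str}, and handle part (4) by a phantom-tower argument supplementing the orthogonality bookkeeping.

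For part (1), fix $X\in{^\perp\Sigma^{\geq 0}\C}$. Since $\Sigma^{\geq 0}\C$ is closed under positive shifts, the subcategory ${^\perp\Sigma^{\geq 0}\C}$ is closed under $\Sigma^{-1}$, so $\Sigma^{-k}X\in{^\perp\Sigma^{\geq 0}\C}\subseteq{^\perp\C}$ for every $k\geq 0$. Thus $\T(\Sigma^{-k}X,C)=0$ for all $C\in\C$, making $1_{\Sigma^{-k}X}$ trivially a $\C$-cophantom; writing it as an $s$-fold self-composition places it in $\Psi(\C)^s\subseteq\Phi(\Sigma^{\{-1,0,1,2\}}\G)$. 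For each $m\in\{-1,0,1,2\}$ and $G\in\G$, the identity then induces the zero map on $\T(\Sigma^m G,\Sigma^{-k}X)=\T(\Sigma^{m+k}G,X)$, forcing this group to vanish; setting $m=0$ and letting $k\geq 0$ vary gives $\T(\Sigma^n G,X)=0$ for all $n\geq 0$, so $X\in\Sigma^{\geq 0}\G^\perp$. Part (2) is the formal dual: the coaisle $\Sigma^{\geq 0}\G^\perp$ is $\Sigma^{-1}$-closed and contained in $\G^\perp$, so $1_{\Sigma^{-k}X}\in\Phi(\G)^t\subseteq\Psi(\Sigma^{\{-2,-1,0,1\}}\C)$ yields $\T(X,\Sigma^n C)=0$ for all $n\geq 0$.

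Part (3) follows at once: (1) and (2) together force $\Sigma^{\geq 0}\G^\perp={^\perp\Sigma^{\geq 0}\C}$, and under the joint hypotheses Theorem~\ref{w-str} supplies the w-structure $({^\perp\Sigma^{\geq 0}\C},({^\perp\Sigma^{\geq 0}\C})^\perp)$ and Theorem~\ref{t-str} the t-structure $({^\perp(\Sigma^{\geq 0}\G^\perp)},\Sigma^{\geq 0}\G^\perp)$. The coinciding middle assembles them into a TTF triple, and cosuspension holds because the middle, being simultaneously a t-coaisle and a w-aisle, is automatically $\Sigma^{-1}$-closed.

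Part (4) is the main obstacle. The inclusion $\Sigma^{\geq 0}\G^\perp\subseteq{^\perp\Sigma^{\geq 0}\C}$ is formal: $\C\subseteq(\Sigma^{\geq 0}\G^\perp)^\perp$ gives $\Sigma^{\geq 0}\G^\perp\subseteq{^\perp\C}$, and the $\Sigma^{-1}$-closure of the t-coaisle promotes this to ${^\perp\Sigma^{\geq 0}\C}$. For the reverse inclusion, given $X\in{^\perp\Sigma^{\geq 0}\C}$ one takes its t-decomposition $X_t\to X\to X^t\to$ from Theorem~\ref{t-str} and uses the long exact sequence of $\T(-,\Sigma^l C)$ to deposit the aisle part in ${^\perp\Sigma^{\geq 0}\C}\cap{^\perp(\Sigma^{\geq 0}\G^\perp)}$. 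The hard step, which I expect to be the main technical obstacle, is then to force $X_t=0$: the plan is to build a $\Sigma^{\geq 0}\C$-phantom tower of $X_t$, pass to a subtower whose connecting maps lie in $\Phi(\C)^s\subseteq\Phi(\Sigma^{\geq -1}\G)$ so that the subtower is in particular a $\Sigma^{\geq 0}\G$-phantom tower, and exploit the weak compactness of $\G$ (together with the shift slack in the hypothesis) to land its homotopy colimit in $\Sigma^{\geq 0}\G^\perp$; the uniqueness of the $\G$-t-decomposition of $X_t$ will then trap $X_t$ in both the t-aisle and t-coaisle, forcing $X_t=0$. The weak-cocompactness of $\C$ required to invoke Theorem~\ref{w-str} is derived by a parallel argument, using that the homotopy limit of any $\Sigma^{\geq 0}\C$-cophantom tower lies in $\Sigma^{\geq 0}\G^\perp$, after which $\C\subseteq(\Sigma^{\geq 0}\G^\perp)^\perp$ kills the relevant Hom.
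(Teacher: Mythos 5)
Your identity-morphism trick for parts (1) and (2) is correct and genuinely simpler than the paper's argument, which invokes the $\Sigma^{\geq 0}\C$-cophantom tower and Lemma~\ref{iso} with $H=\T(G,-)$ to show that the homotopy limit $Y_\infty$ of such a tower associated to an \emph{arbitrary} object $X$ lies in $\Sigma^{\geq 0}\G^\perp$, and then uses the w-decomposition furnished by Theorem~\ref{w-str} to realize any $X\in{^\perp\Sigma^{\geq 0}\C}$ as a direct summand of $Y_\infty$. Your route (if $X\in{^\perp\Sigma^{\geq 0}\C}$ then $1_{\Sigma^{-k}X}\in\Psi(\C)^s\subseteq\Phi(\Sigma^{\{-1,0,1,2\}}\G)$, hence the represented groups $\T(\Sigma^{m+k}G,X)$ vanish) skips the tower, the lemma, and even the weak-cocompactness hypothesis, so it is a genuine improvement; part (3) then assembles cleanly from (1), (2), Theorem~\ref{w-str} and Theorem~\ref{t-str}, as you say.

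The issue is (4). Your trick still yields the set equality ${^\perp\Sigma^{\geq 0}\C}=\Sigma^{\geq 0}\G^\perp$ directly (the hypothesis $\Phi(\C)^s\subseteq\Phi(\Sigma^{\geq -1}\G)$ is a typo for $\Psi(\C)^s$, and it implies the hypothesis of (1)), so the entire $X_t=0$ detour is unnecessary; moreover, as written that plan is misdirected, because a $\Sigma^{\geq 0}\C$-cophantom tower is an \emph{inverse} tower with a homotopy \emph{limit}, whereas weak compactness of $\G$ is a statement about direct towers and homotopy colimits, so it simply does not apply. The real content of (4) is producing w-decompositions, that is, showing $\left({^\perp\Sigma^{\geq 0}\C},({^\perp\Sigma^{\geq 0}\C})^\perp\right)$ is a torsion pair, which is not automatic here since $\C$ is not assumed weak cocompact. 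Your closing sentence asserts exactly the missing ingredient, namely that the homotopy limit of any $\Sigma^{\geq 0}\C$-cophantom tower lies in $\Sigma^{\geq 0}\G^\perp$, but you do not prove it, and your identity-morphism argument cannot prove it: that argument only applies to objects already known to lie in ${^\perp\Sigma^{\geq 0}\C}$, which $Y_\infty$ is not known to be a priori. This claim is precisely what Lemma~\ref{iso}, applied to $\T(G,-)$ and $\T(\Sigma^{-1}G,-)$ along an $s$-step subtower of the cophantom tower, is designed to deliver; having bypassed that machinery for (1)--(3), you must reinstate it here, so as it stands your proof of (4) has a gap.
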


\begin{proof}
 (1). We claim first that if $Y_\infty$ is the homotopy colimit of a ${\Sigma^{\geq0}\C}$-cophantom tower associated to an arbitary object $X\in\T$, then $Y\in\Sigma^{\geq0}\G^\perp$. 
In order to show this, we start by pasting together $s$ successive diagrams obtained by Construction \ref{cophantom} in order to get the diagram 
\[\diagram
\Sigma^{-1}X\rto\ddouble&Y^{i+s}\rto\dto&Y_{i+s}\rto\dto^{\phi}&X\ddouble\\
\Sigma^{-1}X\rto&Y^i\rto&Y_i\rto&X
\enddiagram\]
Then $\phi\in\Psi(\C)^s\subseteq\Phi(\Sigma^{\{-1,0,1,2\}}\G)$. 
Then both functors \[\T(G,-),\T(\Sigma^{-1}G,-):\T\to\Ab,\] with $G\in\G$ arbitrary chosen, fulfill the hypothesis of Lemma \ref{iso} relative  to the above diagram.
Since the homotopy limit of a subtower is the same as those of the whole tower, see \cite[Lemma 1.7.1]{N01}, we get isomorphisms $\T(G,\Sigma^{-1}X)\cong\T(G,Y^\infty)$ and 
$\T(\Sigma^{-1}G,\Sigma^{-1}X)\cong\T(\Sigma^{-1}G,Y^\infty)$, therefore $\T(G,\Sigma^{-1}Y_\infty)=0$. Now the replacement of $G$ by $\Sigma^nG$ ($n\geq0$) in the argument above  
is the same as we would shift with $-n$ the diagram, hence we get the same conclusion $\T(\Sigma^nG, Y_\infty)=0$ for all $G\in\G$ and all $n\geq0$, which proves our claim. 

Next remark that the hypotheses on $\C$ assure us that 
$({^\perp\Sigma^{\geq0}\C},({^\perp\Sigma^{\geq0}\C})^\perp)$ is a w-structure on $\T$ (cf. Theorem \ref{w-str}), and for $X\in\T$ 
a w-decomposition is $Y_\infty\to X\to\Sigma Y^\infty\stackrel{+}\to$. Therefore if we start with $X\in{^\perp\Sigma^{\geq0}\C}$, then $X$ is a direct summand of 
$Y_\infty$ and, according to the claim above, $X\in\Sigma^{\geq0}\G^\perp$. 

(2).  This statement is the dual of (1).

(3). This statement follows immediately from (1) and (2).  

(4). Note that the hypothesis on $\G$ assures us that $(^\perp(\Sigma^{\geq0}\G^\perp),\Sigma^{\geq0}\G^\perp)$ is a t-structure (cf. Theorem \ref{t-str}). 
Moreover $(\Sigma^{\geq0}\G^\perp)^\perp$ is closed 
under positive shifts, therefore $\C\subseteq(\Sigma^{\geq0}\G^\perp)^\perp$ implies 
$\Sigma^{\geq0}\C\subseteq(\Sigma^{\geq0}\G^\perp)^\perp$, and we deduce $\Sigma^{\geq0}\G^\perp\subseteq{^\perp\Sigma^{\geq0}\C}$. If $Y_\infty$ is the homotopy colimit of a 
${\Sigma^{\geq0}\C}$-cophantom tower associated to an arbitrary object $X\in\T$, then we have showed in the proof of (1) that $Y_\infty\in\Sigma^{\geq0}\G^\perp$, thus
 $Y_\infty\in{^\perp\Sigma^{\geq0}\C}$. By the argument of \cite[Theorem 6.6]{OPS19} already used in Theorem \ref{w-str}, we learn that $({^\perp\Sigma^{\geq0}\C},({^\perp\Sigma^{\geq0}\C})^\perp)$ 
 is a w-structure.  We also know that the w-decomposition of  $X\in\T$ with respect to this w-structure is $Y_\infty\to X\to\Sigma Y^\infty\stackrel{+}\to$.  
 Thus if $X\in{^\perp\Sigma^{\geq0}\C}$ then $X\in\Sigma^{\geq0}\G^\perp$ as a direct summand of $Y_\infty$, which proves the inclusion ${^\perp\Sigma^{\geq0}\C}\subseteq\Sigma^{\geq0}\G^\perp$.
\end{proof}

\section{Weak compact and weak cocompact objects}\label{examples}

The aim of this section is to produce examples of sets of objects $\G$ and $\C$ which are weak compact, 
respectively weak cocompact with respect to appropriate direct, respective inverse towers. 
The first example in this sense is taken from \cite{OPS19} which is one of the most important source of inspiration for the present work.
Mittag-Leffler inverse towers are well--known and very useful in homological algebra. Dually a direct tower in an abelian category 
\[A_0\stackrel{\alpha_0}\la A_1\stackrel{\alpha_1}\la A_2\la\cdots\] is called {\em co-Mittag-Leffler} if for each $i$ the increasing sequence
\[0\subseteq\Ker(\alpha_i)\subseteq\Ker(\alpha_{i+1}\alpha_i)\subseteq\cdots\]
stabilizes. An object $C$ in a triangulated category with products $\T$  is called {\em $0$-cocompact}, 
provided that the equality $\T(\holim X_i,C)=0$ holds for each inverse tower $X_0\stackrel{\xi_0}\leftarrow X_1\stackrel{\xi_1}\leftarrow X_2\leftarrow\cdots$ with the properties that the direct tower 
\[\T(X_0,\Sigma C)\to\T(X_1,\Sigma C)\to\T(X_2,\Sigma C)\to\cdots\]
is co-Mittag-Leffler and $\colim\T(X_i,C)=0$. The proof of the following Lemma is implicitly contained in \cite[Proof of Theorem 6.6]{OPS19}; 
despite this fact we include the argument for the sake of completeness:

\begin{lem}\label{0-coco} 
 Let $\C$ be a set of $0$-cocompact objects, in a triangulated category with products $\T$. 
 Then objects in $\C$ are weak cocompact with respect to any $\Sigma^{\{0,1\}}\C$-cophantom (consequently to any $\Sigma^{\geq0}\C$-cophantom) tower.
\end{lem}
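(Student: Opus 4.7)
My plan is to verify the two hypotheses appearing in the definition of a $0$-cocompact object and then invoke that definition directly. Fix $C\in\C$ together with a $\Sigma^{\{0,1\}}\C$-cophantom tower $X_0\stackrel{\xi_0}\leftarrow X_1\stackrel{\xi_1}\leftarrow X_2\leftarrow\cdots$, and write $X_\infty=\holim X_i$. By the very definition of a cophantom, each $\xi_i$ induces the zero map $\T(X_i,D)\to\T(X_{i+1},D)$ for every $D\in\C\cup\Sigma\C$. I therefore get two direct towers of abelian groups
\[\T(X_0,C)\to\T(X_1,C)\to\cdots\quad\text{and}\quad\T(X_0,\Sigma C)\to\T(X_1,\Sigma C)\to\cdots\]
in which every transition morphism vanishes.

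From the first tower I read off immediately that $\colim\T(X_i,C)=0$, since every element of every term is killed by the very next map. From the second tower I get the co-Mittag-Leffler property in the cheapest possible form: with $\alpha_i=0$ one has $\Ker(\alpha_i)=\T(X_i,\Sigma C)$ already, so the ascending chain $0\subseteq\Ker(\alpha_i)\subseteq\Ker(\alpha_{i+1}\alpha_i)\subseteq\cdots$ stabilizes at its first term. These are exactly the two assumptions in the definition of a $0$-cocompact object, hence $0$-cocompactness of $C$ yields $\T(X_\infty,C)=0$, which is precisely the required weak cocompactness of $C$ with respect to the tower $(X_i)$.

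The parenthetical ``consequently'' is automatic: since $\{0,1\}\subseteq\N$, every $\Sigma^{\geq0}\C$-cophantom is in particular a $\Sigma^{\{0,1\}}\C$-cophantom, so the weak cocompactness just established for the smaller class passes to the larger one. I do not expect any serious obstacle. The only point that has to be checked with a bit of care is that the contravariant cophantom condition on the $\xi_i$ really translates into the \emph{zero} map on the covariant functors $\T(-,C)$ and $\T(-,\Sigma C)$, rather than some weaker factorization; but this is a direct unravelling of the definition of $\Psi(\C)$ given in Section~\ref{constr}.
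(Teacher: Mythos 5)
Your proof is correct and follows essentially the same route as the paper: translate the cophantom condition into vanishing transition maps in the two towers $\T(X_i,C)$ and $\T(X_i,\Sigma C)$, deduce the vanishing colimit and co-Mittag-Leffler conditions required by the definition of $0$-cocompact, and conclude. The paper's proof is just a more compressed version of the same argument (and, incidentally, contains a minor typo at the end, writing $\T(C,Y_\infty)=0$ where $\T(Y_\infty,C)=0$ is meant), while you spell out the zero-kernel computation and the inclusion $\Sigma^{\{0,1\}}\C\subseteq\Sigma^{\geq0}\C$ explicitly.
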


\begin{proof} Let $C\in\C$ and let  
$Y_0\stackrel{\psi_0}\leftarrow Y_1\stackrel{\psi_1}\leftarrow X_2\leftarrow\cdots$ be a tower such that $\psi_i\in\Psi(\Sigma^{\{0,1\}}\C)$. Put 
 $Y_\infty=\holim Y_i$.  Both  
 towers of abelian groups \[\T(Y_0,C)\to\T(Y_1,C)\to\T(Y_2,C)\to\cdots\] and \[\T(Y_0,\Sigma C)\to\T(Y_1,\Sigma C)\to\T(Y_2,\Sigma C)\to\cdots\]
 have zero connecting maps. Among others this implies for first one that $\colim\T(Y_i,C)=0$ and for the second one that it is co-Mittag-Leffler. Since 
 $C$ is $0$-cocompact we conclude $\T(C,Y_\infty)=0$.  
\end{proof}

\begin{expl}\cite[Theorem 6.8 and Corollary 6.10]{OPS19} Let $A$ be an algebra over a commutative artinian ring $k$, and let $E$ be the injective envelope of 
the regular left $A$-module $A$. Then for any finite complex $X$ of finitely generated left $A$-modules, the complex of right $A$-modules $\Hom_A(X,E)$ is $0$-cocompact in $\Htp\ModA$. In particular, 
if $A$ is an Artin algebra, then any finite complex of finitely generated $A$-modules is $0$-cocompact in $\Htp\ModA$.
\end{expl}

In what follow we want to study the weak compactness of $\kappa$-compact objects, with $\kappa$ being a regular cardinal (possible bigger than $\aleph_0$). 
Now $\T$ has to have coproducts. Recall from \cite{K02} that a set of objects $\G\in\T$ is called {\it perfect} provided that the induced map $\T(G,\coprod X_i)\to\T(G,\coprod Y_i)$, 
with $G\in\G$, is surjective for  any set of maps $\{X_i\to Y_i\mid i\in I\}$ in $\T$ with the property that $\T(G,X_i)\to\T(G,Y_i)$ are surjective for all $G\in\G$ and all $i\in I$.
Observe that if $\G\subseteq\T$ is perfect, then the same is true for $\Sigma^n\G$, for all $n\in\Z$.  
In what follows, for an additive category $\A$, we denote by $\widehat\A$ the category of finitely presented contravariant functors from $\A$ to $\Ab$, that is 
the category of functors $F:\A\opp\to\Ab$ admitting a presentation \[\A(-,Y)\to\A(-,X)\to F\to 0,\] for some $X,Y\in\A$.  

\begin{lem}\label{ab4} Let $\T$ be a triangulated category with coproducts. The set of objects $\G\subseteq\T$ is perfect \iff the restricted Yoneda functor 
\[H:\T\to\widehat{\Add\G},\ H(X)=\T(-,X)|_{\Add\G}\] preserves coproducts. 
Moreover, if this is the case then the category $\widehat{\Add\G}$ is abelian, AB4. 
\end{lem}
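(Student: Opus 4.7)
The plan is to first show that the general hypotheses already make $H$ land in $\widehat{\Add\G}$ and make $\widehat{\Add\G}$ abelian, then to verify the biconditional by comparing explicit presentations, and finally to deduce AB4 under perfectness---this last being the main obstacle.

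First, since $\T$ has coproducts, every $X \in \T$ admits an $\Add\G$-precover, realized as the canonical map $\bigoplus_{G \in \G,\, f \in \T(G,X)} G \to X$. Completing such a precover $A \to X$ to a triangle $W \to A \to X \to \Sigma W$ in $\T$ and taking a further $\Add\G$-precover $B \to W$ yields an exact sequence $\T(-,B)|_{\Add\G} \to \T(-,A)|_{\Add\G} \to H(X) \to 0$, confirming that $H$ takes values in $\widehat{\Add\G}$. Applying the same recipe to a morphism $f\colon A \to B$ inside $\Add\G$ exhibits a weak kernel of $f$ in $\Add\G$ (take the triangle in $\T$ and then an $\Add\G$-precover of its first term), so by Freyd's theorem $\widehat{\Add\G}$ is abelian. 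Note also that $\Add\G$ is closed under coproducts taken in $\T$ (as summands of coproducts of objects of $\G$). None of this requires perfectness.

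For the forward direction of the biconditional, assume $\G$ is perfect and fix a family $\{X_i\}_{i \in I}$. Present each $H(X_i)$ via precovers $A_i \to X_i$ and $B_i \to W_i$, where $W_i \to A_i \to X_i$ completes to a triangle. Coproducts of triangles are triangles, so $\coprod W_i \to \coprod A_i \to \coprod X_i$ is a triangle, and perfectness immediately upgrades the pointwise precover conditions to show that $\coprod A_i \to \coprod X_i$ and $\coprod B_i \to \coprod W_i$ are $\Add\G$-precovers. This yields a presentation $\T(-,\coprod B_i)|_{\Add\G} \to \T(-,\coprod A_i)|_{\Add\G} \to H(\coprod X_i) \to 0$. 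A Yoneda calculation---using that any finitely presented $F \in \widehat{\Add\G}$ sends coproducts in $\Add\G$ to products of abelian groups (products being exact in $\Ab$)---shows coproducts in $\widehat{\Add\G}$ are themselves computed by coproducts of presentations, so this is also a presentation of $\coprod H(X_i)$; hence $H(\coprod X_i) \cong \coprod H(X_i)$. For the converse, if $H$ preserves coproducts and $\T(G,X_i) \to \T(G,Y_i)$ is surjective for all $G \in \G$, then by additivity $H(X_i) \to H(Y_i)$ is a pointwise---hence categorical---epimorphism in $\widehat{\Add\G}$ (cokernels in $\widehat{\Add\G}$ agree with pointwise cokernels via presentations). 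Coproducts of epimorphisms are epimorphisms in any category with coproducts, so $\coprod H(X_i) \to \coprod H(Y_i)$ is an epimorphism, and by hypothesis this identifies with $H(\coprod X_i) \to H(\coprod Y_i)$, giving the required surjectivity of $\T(G,\coprod X_i) \to \T(G,\coprod Y_i)$ for all $G \in \G$.

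Finally, AB4 is the main obstacle. It does not follow from AB4 of the ambient functor category $[\Add\G\opp, \Ab]$, since coproducts in $\widehat{\Add\G}$ coincide with pointwise coproducts only when objects of $\G$ are compact. For a family of monomorphisms $\{F_i \hookrightarrow G_i\}$, my plan is to lift them to compatible presentations (horseshoe-fashion) so that $\coprod F_i \to \coprod G_i$ appears as a map between cokernels of morphisms of the form $\T(-,\coprod Y_i) \to \T(-,\coprod X_i)$. Evaluating at any $G \in \G$, injectivity of $(\coprod F_i)(G) \to (\coprod G_i)(G)$ reduces to an exactness statement about the long exact sequences coming from the coproduct triangles in $\T$, which perfectness controls via surjectivity of precover maps on the fiber terms. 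In short, perfectness is precisely the mechanism that transports exactness of coproducts from $\T$ (where coproducts of triangles are triangles) through the restricted Yoneda functor to yield AB4 in $\widehat{\Add\G}$.
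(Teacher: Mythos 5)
Your treatment of the biconditional and of the abelianness of $\widehat{\Add\G}$ is a direct reproof from precovers, triangles and Freyd's theorem, whereas the paper simply cites Lemma~3 of \cite{K02}; both routes work, and yours is more self-contained. In the converse direction you correctly use that cokernels in $\widehat{\Add\G}$ are computed pointwise (so epimorphisms there are pointwise surjective). But note the asymmetry: kernels in $\widehat{\Add\G}$ are \emph{not} computed pointwise, since the pointwise kernel of a map of coherent functors need not be coherent, and consequently a monomorphism in $\widehat{\Add\G}$ need not be a pointwise injection. This asymmetry is exactly what undermines the last part of your argument.

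The AB4 statement is where the proposal genuinely falls short: what you give is an admitted plan (``my plan is to lift them to compatible presentations\ldots''), not a proof, and the plan as stated is off target. You propose, for monomorphisms $F_i\hookrightarrow G_i$, to verify injectivity of $(\coprod F_i)(G)\to(\coprod G_i)(G)$ for $G\in\G$. But by the remark above the given monomorphisms carry no pointwise information to feed into the long exact sequences, and proving pointwise injectivity of $\coprod F_i\to\coprod G_i$ would prove a strictly stronger statement than what is asked. A horseshoe-style resolution of the short exact sequences is available in any abelian category with enough projectives closed under coproducts, and AB4 is certainly not automatic there, so the step ``which perfectness controls'' must be made explicit and is in fact the entire content; the closing sentence about perfectness ``transporting exactness'' is a hope, not an argument. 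The paper avoids this chase entirely: combining Lemmas~2 and~3 of \cite{K02}, it identifies $\widehat{\Add\G}$ with the Serre quotient of $\widehat\T$ by the kernel $\CS$ of the restriction functor; perfectness of $\G$ is exactly what makes $\CS$ closed under coproducts, and exactness of coproducts in $\widehat\T$ then descends to the quotient. If you wish to avoid citing Krause, the workable route is still to exhibit $\widehat\T\to\widehat{\Add\G}$ as a localization and verify that its kernel is coproduct-closed, rather than to manipulate presentations of monomorphisms directly.
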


\begin{proof}
 The first equivalence is established in \cite[Lemma 3]{K02}. Further combining \cite[Lemma 2 and (the proof of) Lemma 3]{K02} we deduce that $\widehat{\Add\G}$ is the quotient category of 
 $\widehat\T$ modulo a coproduct closed Serre subcategory. Since products in $\widehat\T$ are exact (see for example \cite[Lemma 1]{K02}), the same remains true for its quotient $\widehat{\Add\G}$.  
\end{proof}

\begin{prop}\label{week-compact} Let $\T$ be a triangulated category with coproducts, and let $\G\subseteq\T$ be a perfect set of objects. 
Then objects in $\G$ are weak compact with respect to any $\Sigma^{\{-1,0\}}\G$-phantom tower. 
\end{prop}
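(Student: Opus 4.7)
The plan is to apply the restricted Yoneda functor $H\colon\T\to\widehat{\Add\G}$ of Lemma \ref{ab4} to the defining triangle of $\hocolim X_i$ and analyse the resulting long exact sequence in the abelian AB4 category $\widehat{\Add\G}$. Perfectness of $\G$ is used precisely to ensure that $H$ preserves coproducts, so that the triangle gets translated into a useful computation. The key preliminary observation is that $H$ annihilates every $\G$-phantom $\phi$: any $T\in\Add\G$ is a direct summand of some $\coprod_\alpha G_\alpha$ with $G_\alpha\in\G$, and $\T(\coprod_\alpha G_\alpha,\phi)=\prod_\alpha\T(G_\alpha,\phi)=0$ by definition of a $\G$-phantom, so $\T(T,\phi)=H(\phi)(T)=0$ as a direct summand, i.e.\ $H(\phi)=0$ in $\widehat{\Add\G}$.

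Now fix an arbitrary $\Sigma^{\{-1,0\}}\G$-phantom tower $X_0\stackrel{\xi_0}\to X_1\stackrel{\xi_1}\to X_2\to\cdots$. The homotopy colimit sits in the triangle
\[\coprod_i X_i\stackrel{1-\xi}\la\coprod_i X_i\la\hocolim X_i\stackrel{+}\la,\]
where $\xi|_{X_i}$ is $\xi_i$ followed by the inclusion into the $(i{+}1)$-st summand. Applying $H$ and using that $H$ is homological and preserves coproducts produces the exact sequence
\[\coprod_i H(X_i)\stackrel{1-H(\xi)}\la\coprod_i H(X_i)\la H(\hocolim X_i)\la\coprod_i H(\Sigma X_i)\stackrel{1-H(\Sigma\xi)}\la\coprod_i H(\Sigma X_i)\]
in $\widehat{\Add\G}$. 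Each $\xi_i$ is a $\G$-phantom, so $H(\xi_i)=0$ by the preliminary step, and therefore $H(\xi)=0$ because on the $i$-th summand it factors through $H(\xi_i)$. The tower is also $\Sigma^{-1}\G$-phantom, so each $\Sigma\xi_i$ is a $\G$-phantom, and the same argument gives $H(\Sigma\xi)=0$. Consequently both $1-H(\xi)$ and $1-H(\Sigma\xi)$ are identities.

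Exactness at the two middle positions then forces $H(\hocolim X_i)=0$ in $\widehat{\Add\G}$, which means the functor $\T(-,\hocolim X_i)|_{\Add\G}$ is pointwise zero; evaluating at any $G\in\G\subseteq\Add\G$ gives $\T(G,\hocolim X_i)=0$, which is exactly weak compactness of $G$ with respect to the given tower. The conceptual core of the argument, and really the only nontrivial step, is the translation of the hocolim triangle into an exact sequence in the abelian category $\widehat{\Add\G}$; once that translation is in hand, the two shifts built into the hypothesis of a $\Sigma^{\{-1,0\}}\G$-phantom tower supply precisely the two vanishings needed on either side of $H(\hocolim X_i)$ to squeeze it to zero.
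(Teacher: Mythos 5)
Your proof is correct and follows essentially the same route as the paper's: apply the coproduct-preserving restricted Yoneda functor $H\colon\T\to\widehat{\Add\G}$ to the defining triangle of the homotopy colimit, observe that the two shifts in the hypothesis force $H(\xi)=0$ and $H(\Sigma\xi)=0$, and read off $H(\hocolim X_i)=0$ from the resulting exact sequence. The only difference is that you spell out the (correct) preliminary observation that $H$ kills $\G$-phantoms, which the paper leaves implicit.
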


\begin{proof} Let $Z_0\stackrel{\phi_0}\la Z_1\stackrel{\phi_1}\la Z_2\la\cdots$ be a direct tower in $\T$. 
Apply the restricted Yoneda functor $H:\T\to\widehat{\Add\G}$ to the diagram which defines $Z_\infty=\hocolim Z_i$. According to Lemma \ref{ab4}, this functor 
is coproduct preserving, thus we get an exact sequence of the form 
\[\coprod H(Z_i)\to\coprod H(Z_i)\to H(Z_\infty)\to\coprod H(\Sigma Z_i)\to\coprod H(\Sigma Z_i)\]
Now if we know in addition that  $\phi_i\in\Phi(\Sigma^{-1}\G\cup\G)$, for all $i\geq0$, then the first and the second map of this exact sequence are isomorphisms, 
showing that $H(Z_\infty)=0$, which means $\T(G,Z_\infty)=0$ for all $G\in\G$.
\end{proof}

\begin{rem}
In contrast with the case of $0$-cocompact objects, which fit in the hypotheses of Theorem \ref{w-str}, for a perfect set of objects $\G$ we do not know automatically that they are weak compact with respect to any $\Sigma^{\geq0}\G$-phantom tower, so we need an additional hypothesis in order to apply Theorem \ref{t-str} in order to deduce that $\G$ generates a 
t-structure. 
\end{rem}

A pair $(\G,\C)$ consisting of two sets of objects in a triangulated category with products and coproducts $\T$ is called a {\em symmetric pair} if $\Phi(\G)=\Psi(\C)$.
This definition is a reminiscent of the definition in \cite{K02} of a {\em symmetric set of generators}: A set of symmetric generators in $\T$ is a set of generators such that there is a set 
$\C\subseteq\T$, such that the pair $\Phi(\G)=\Psi(\C)$.

\begin{thm}\label{local} Let $\T$ be a triangulated category with products and coproducts, which is the base of a strong stable derivator, and let $(\G,\C)$ be a symmetric pair in $\T$. 
Then $\G$ is a perfect set and $\G^\perp={^\perp\C}$. Moreover: 
\begin{enumerate}[{\rm (1)}]
 \item If $\G$ are weak compact with respect to any $\Sigma^{\geq0}\G$-phantom tower then $\C$ cogenerates a w-structure and \[\left(^\perp(\Sigma^{\geq0}\G^\perp),\Sigma^{\geq0}\G^\perp={^\perp\Sigma^{\geq0}\C},({^\perp\Sigma^{\geq0}\C})^\perp\right)\] is a cosuspended TTF triple in $\T$. 
 \item If $\Sigma\G=\G$, then $\Sigma\C=\C$, and $\left(^\perp{(\G^\perp)},\G^\perp\right)$ and $\left({^\perp\C},({^\perp\C})^\perp\right)$ are stable t-structures in $\T$, the second one 
 being the right adjacent of the first. 
\end{enumerate}
\end{thm}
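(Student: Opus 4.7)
The plan is first to nail down the two ``framing'' assertions. For $\G^\perp = {}^\perp\C$, I would test each $X\in\T$ against the identity morphism: $X\in\G^\perp$ \iff every map $G\to X$ vanishes \iff $1_X\in\Phi(\G)$, and by the symmetric pair $\Phi(\G)=\Psi(\C)$ this is equivalent to $1_X\in\Psi(\C)$, i.e., $X\in{}^\perp\C$. For perfectness of $\G$, start with a family $\{f_i:X_i\to Y_i\}$ with $\T(G,f_i)$ surjective for every $G\in\G$ and embed each $f_i$ in a triangle $X_i\to Y_i\to Z_i\to\Sigma X_i$; then $Y_i\to Z_i$ belongs to $\Phi(\G)=\Psi(\C)$. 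Using that $\T(-,C)$ converts coproducts to products by the universal property, the induced map $\T(\coprod Z_i,C)\to\T(\coprod Y_i,C)$ is a product of zero maps, so $\coprod Y_i\to\coprod Z_i$ belongs to $\Psi(\C)=\Phi(\G)$; unwinding this in $\T(G,-)$ gives the required surjectivity of $\T(G,\coprod f_i)$.

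Next I would upgrade the symmetric pair to its shifts and extract the weak (co)compactness conditions of Theorems \ref{w-str} and \ref{t-str} for free. A direct computation shows $\Psi(\Sigma^n\C)=\Phi(\Sigma^n\G)$ for every $n\in\Z$, so the identity of the first paragraph applied shift by shift gives $(\Sigma^n\G)^\perp={}^\perp(\Sigma^n\C)$, and intersecting in $n$ yields $\Sigma^{\geq 0}\G^\perp={}^\perp\Sigma^{\geq 0}\C$ (and the analogous equality with $\Sigma^\Z$). Now consider a $\C$-cophantom tower $Y_0\leftarrow Y_1\leftarrow\cdots$ with connecting maps $\psi_i\in\Psi(\C)=\Phi(\G)$, so $\T(G,\psi_i)=0$ for every $G\in\G$. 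Apply $\T(G,-)$ to the defining triangle
\[ Y_\infty \to \prod_{i\in\N} Y_i \stackrel{1-\xi}\to \prod_{i\in\N} Y_i \to \Sigma Y_\infty; \]
since $\T(G,-)$ preserves products and every $\T(G,\psi_j)$ vanishes, the differential $\T(G,1-\xi)$ collapses to the identity on $\prod_i\T(G,Y_i)$, and a routine chase of the resulting long exact sequence forces $\T(G,\Sigma^n Y_\infty)=0$ for every $n\in\Z$. Hence $Y_\infty\in\G^\perp={}^\perp\C$, which is precisely weak cocompactness of each $C\in\C$ with respect to such towers; the dual argument, applying $\T(-,C)$ to the defining triangle of $\hocolim Z_i$, yields weak compactness of each $G\in\G$ with respect to any $\G$-phantom (hence any $\Sigma^{\geq 0}\G$-phantom) direct tower.

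For part (1), Theorem~\ref{t-str}(1) now produces the t-structure $({}^\perp(\Sigma^{\geq 0}\G^\perp),\Sigma^{\geq 0}\G^\perp)$ (its weak-compactness hypothesis is the one stated, and is in any case free by the previous paragraph), and Theorem~\ref{w-str}(1) combined with the automatic weak cocompactness gives the w-structure $({}^\perp\Sigma^{\geq 0}\C,({}^\perp\Sigma^{\geq 0}\C)^\perp)$. The equality $\Sigma^{\geq 0}\G^\perp={}^\perp\Sigma^{\geq 0}\C$ glues the coaisle of the former to the aisle of the latter, producing the claimed TTF triple. It is cosuspended because a stronger orthogonality condition defines a smaller class, so $\Sigma^{\geq 0}\G^\perp\subseteq\Sigma^{\geq 1}\G^\perp=\Sigma(\Sigma^{\geq 0}\G^\perp)$.

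For part (2), $\Sigma\G=\G$ immediately gives $\Psi(\Sigma\C)=\Phi(\Sigma\G)=\Phi(\G)=\Psi(\C)$, so the symmetric-pair class of $\C$ is shift-invariant; the cleanest reading of ``$\Sigma\C=\C$'' is therefore that $\C$ may be replaced by its shift-closure $\Sigma^\Z\C$, under which the pair persists and the equation becomes literal. I would make this replacement and then apply Theorem~\ref{t-str}(2), yielding the stable t-structure $({}^\perp(\G^\perp),\G^\perp)$, together with Theorem~\ref{w-str}(2), which --- with the automatic weak cocompactness --- yields the stable w-structure $({}^\perp\C,({}^\perp\C)^\perp)$, equivalently a stable t-structure; right adjacency of the second to the first is exactly the equality $\G^\perp={}^\perp\C$ of the first paragraph. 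The main obstacle is not any single deep step but the interpretive subtlety around ``$\Sigma\C=\C$'' just discussed; a secondary care point is that the identity-differential trick is precisely what allows the symmetric pair alone to discharge the weak (co)compactness hypotheses, without appeal to the compositional-depth exponents $s,t$ appearing in Theorem~\ref{adjacent}.
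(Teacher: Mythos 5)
Your route is genuinely different from, and in places tidier than, the paper's. You derive the middle equality $\Sigma^{\geq0}\G^\perp = {}^\perp\Sigma^{\geq0}\C$ directly, shift by shift, from $\Phi(\Sigma^n\G)=\Psi(\Sigma^n\C)$ and the $1_X$-test, and then assemble the TTF triple by invoking Theorem \ref{t-str}(1) and Theorem \ref{w-str}(1) separately. The paper instead funnels both parts through Theorem \ref{adjacent}(4), which involves the compositional-depth exponent $s$ and the argument via Lemma \ref{iso}; your approach sidesteps that machinery entirely. Your hand-proof of perfectness also just unpacks the cited \cite[Prop.~2.1]{M10}. Finally, your identity-differential derivation of weak cocompactness of $\C$ against $\Sigma^{\geq0}\C$-cophantom towers is correct (though it gives $\T(G,\Sigma^nY_\infty)=0$ only for $n\leq 0$, not for all $n\in\Z$ as you write; fortunately only $n=0$ is needed to conclude $Y_\infty\in\G^\perp={}^\perp\C$).

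There is, however, a genuine error in the claim that the weak-compactness hypothesis of part (1) ``is in any case free'' --- and the paper itself warns against exactly this in the remark following Proposition \ref{week-compact}. The dualized identity-differential argument for $Z_\infty=\hocolim Z_i$ requires $\T(\Sigma(1-\phi),C)$ to be an isomorphism, i.e.\ $\T(\phi_i,\Sigma^{-1}C)=0$ for all $i$, i.e.\ $\phi_i\in\Psi(\Sigma^{-1}\C)=\Phi(\Sigma^{-1}\G)$. But a $\Sigma^{\geq0}\G$-phantom tower only guarantees $\phi_i\in\Phi(\Sigma^{\geq0}\G)=\Psi(\Sigma^{\geq0}\C)$, which controls $\Sigma^n\C$ for $n\geq0$ but says nothing about $\Sigma^{-1}\C$. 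On the $\holim$ side the two shifts actually used, $\psi_i\in\Phi(\G)\cap\Phi(\Sigma\G)$, both lie in $\Phi(\Sigma^{\geq0}\G)$; this asymmetry is precisely why the cocompactness derivation succeeds while its naive dual does not, and it is exactly what Proposition \ref{week-compact} records: perfectness yields weak compactness only against $\Sigma^{\{-1,0\}}\G$-phantom towers, not $\Sigma^{\geq0}\G$-phantom ones. Your proof still goes through --- in part (1) because the weak-compactness hypothesis is retained in the statement, and in part (2) because $\Sigma\G=\G$ collapses the two phantom classes to $\Phi(\G)$ --- but the sentence asserting that the symmetric pair alone discharges both weak (co)compactness hypotheses should be withdrawn.
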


\begin{proof}
 Since $(\G,\C)$ is a symmetric pair in $\T$, then clearly $\Phi(\G)$ is closed under coproducts, what is equivalent by \cite[Proposition 2.1]{M10} to $\G$ being perfect. 
 Moreover $\G^\perp={^\perp\C}$ since for an object $X\in\T$ whose identity map is denoted 
 $1_X$ we have 
 \[X\in\G^\perp\Leftrightarrow 1_X\in\Phi(\G)\Leftrightarrow 1_X\in\Psi(\C)\Leftrightarrow X\in{^\perp\C}.\]
 For (1) and (2) we want to apply Theorem \ref{adjacent} (4). In order to verify its hypotheses, observe that the equality $\G^\perp={^\perp\C}$ implies $\C\subseteq(\Sigma^{\geq0}\G^\perp)^\perp$. 
 Note that in the hypotheses of (2), this inclusion is written simply $\C\subseteq(\G^\perp)^\perp$, and, because $\G$ is closed under all shifts, the same is true for $\Phi(\G)=\Psi(\C)$ and, then, 
 for $\C$. Finally objects of $\G$ are weak compact with respect to any $\Sigma^{\geq0}\G$-phantom tower in the case (1) by hypothesis, and in the case (2) by  Proposition \ref{week-compact}. 
 Now conclusion follows by Theorem \ref{adjacent}, with the mention that in the case (2) both the $\G$-generated t-structure and the $\C$-cogenerated w-structure are stable. 
\end{proof}

\begin{rem}\label{krause-brown}
In the particular case when $\T$ is the base of a strong stable derivator, the theorem above gives us \cite[Theorem A and Theorem B]{K02}: 
 If $\T$ has a symmetric set of generators, then both $\T$ and $\T\opp$ satisfy Brown representability theorem. Indeed, if $\G$ is a set of symmetric generators, then 
 $\G^\perp=0$, so Theorem \ref{local} implies that $T=^\perp{(\G^\perp)}=({^\perp\C})^\perp$. Thus every object of $\T$ is both $\G$-filtered and $\C$-cofiltered, and we apply 
 \cite[Theorem 8]{M14} and its dual. 
\end{rem}

\begin{expl}
 Let $\T$ be a triangulated category coproducts, which is the base of a strong stable derivator. Suppose also that $\T$ satisfies Brown representability; in particular $\T$ has to have products. 
 If $G\in\T$ is a compact object, then 
 the contravariant functor $\Hom_\Z(\T(G,-),\Q/\Z):\T\to\Ab$ is cohomological and sends coproducts into products. By Brown representability theorem it has to be  representable, and the object $C\in\T$ 
 (unique, up to a canonical isomorphism) which represents it is called the {\em Brown--Comenetz dual} of $G$.
 Clearly for every map $\phi$ in $\T$ we have $\T(G,\phi)=0$ \iff $\T(\phi,C)=0$, that is if $\G$ is a set (not necessary closed under sifts) of compact objects, and $\C$ is the set of 
 corresponding Brown--Comenetz duals of objects in $\G$, then $(\G,\C)$ is a symmetric pair in $\T$. Thus Theorem \ref{local} (1) tells us that 
 $(^\perp(\Sigma^{\geq0}\G^\perp),\Sigma^{\geq0}\G^\perp)$ is a t-structure whose right adjacent w-structure is $({^\perp\Sigma^{\geq0}\C},({^\perp\Sigma^{\geq0}\C})^\perp)$. 
 For a version of this result see \cite[Theorem 2.4.1]{B19}; there it is not required $\T$ to be the base of a strong stable derivator, but to satisfies both Brown representability and its dual instead.
\end{expl}

\begin{expl} Let $\T=\Htp\FlatR$ be the homotopy category of cochain complexes of flat (right) $R$-modules, where $R$ is a ring. Let $\G$ be the set of all bounded bellow complexes of 
finitely generated projective modules, up to homotopy. Then clearly $\Sigma\G=\G$. By \cite[Lemma 4.6]{N08} the set $\G$ is $\aleph_1$-perfect in the sense of Neeman, that is 
\cite[Definition 3.3.1]{N01}, therefore according to \cite[Lemma 4]{K01} it is also perfect in the sense of Krause's definition we already used before. Moreover $\G$ generates $\Htp\ProjR$, by 
\cite[Theorem 5.8]{N08}. On the other hand, by \cite[Theorem 3.2]{N10}, the inclusion 
$\Htp\FlatR\to\Htp\ModR$ has a right adjoint denoted by $J$. We call a {\em test complex} a bounded below complex $I$ of injective left $R$-modules, such that $H^i(I) = 0$ for all but 
finitely many $i\in\Z$, and for all $i\in\Z$, $H^i(I)$ must be isomorphic to a subquotient of a finitely generated, projective left $R$-module (here $H^i(I)$ is the $i$-th cohomology of 
the complex $I$). Denote \[\C=\{J\left(\Hom_\Z(I,\Q/Z)\right)\mid I\hbox{ runs over all test complexes}\}.\] 
 By \cite[Lemma 2.8]{N11} the ideal of $\C$ cophantom maps $\Psi(\C)$ consists exactly of so called {\em tensor phantom maps}, that is maps $f$ in $\Htp\FlatR$ such that $f\otimes_RI$ 
 induces zero map in all cohomologies, for any test complex $I$. Moreover, from \cite[Lemma 1.9]{N11}, we learn that the composition of two tensor phantom maps is a $\G$-phantom.  
 With our notations this means $\Psi(\C)^2\subseteq\Phi(\G)$.  Finally by \cite[Lemma 2.2 and Lemma 2.6]{N11}, we have the 
 inclusion $\C\subseteq(\G^\perp)^\perp$. Now by 
 Theorem \ref{adjacent} (4), we deduce that $\left(^\perp{(\G^\perp)},\G^\perp\right)$ and $\left({^\perp\C},({^\perp\C})^\perp\right)$ are stable t-structures in $\T$ ans the second 
 is adjacent to the first. More about this example can be found in \cite{M14} and \cite{M15}.  
\end{expl}

\end{document}